\newtheorem{theorem}{Theorem}[section]
\newtheorem{corollary}[theorem]{Corollary}
\newtheorem{lemma}[theorem]{Lemma}
\theoremstyle{definition}
\newtheorem{rem}[theorem]{Remark}
\def \N {{\mathbb N}}
\numberwithin{equation}{section}
\begin{document}


\baselineskip=17pt


\title{Egyptian Fractions with odd denominators}

\author{
Christian Elsholtz\footnote{
Institut f\"ur Analysis und Zahlentheorie,
Graz University of Technology,
Kopernikusgasse 24,
A-8010 Graz, Austria.
E-mail: elsholtz@math.tugraz.at}}

\date{}

\maketitle


\renewcommand{\thefootnote}{}


\footnote{2010 \emph{Mathematics Subject Classification}: 11D68,
11D72.}

\footnote{\emph{Key words and phrases}: Egyptian fractions; 
number of solutions of Diophantine equations.}

\renewcommand{\thefootnote}{\arabic{footnote}}
\setcounter{footnote}{0}


\begin{abstract} 
The number of solutions
of the diophantine equation
$\sum_{i=1}^k \frac{1}{x_i}=1,$
in particular when the $x_i$ are
distinct odd positive integers is investigated.
The number of solutions $S(k)$ in this case is, for odd $k$:
\[\exp \left( \exp \left( c_1\, \frac{k}{\log k}\right)\right)
\leq S(k) 
\leq \exp \left( \exp \left(c_2\, k \right)\right)  \]
with some positive constants $c_1$ and $c_2$. This improves upon an earlier 
lower bound of $S(k) \geq \exp \left( (1+o(1))\frac{\log 2}{2} k^2\right)$.
\end{abstract}

\section{Introduction}
In this paper we study the number of solutions
of the diophantine equation
\begin{equation}{\label{eq:main}}
 \sum_{i=1}^k \frac{1}{x_i}=1,
\end{equation}
in particular, where the $x_i$ have some restrictions, such as all $x_i$ are
distinct odd positive integers. 
Let us first review what is known for distinct positive integers, 
without further restriction:
Let 
\[{\cal X}_k=\{(x_1, x_2, \ldots,x_k): \sum_{i=1}^k \frac{1}{x_i}=1, \quad
 0<x_1<x_2< \cdots < x_k\}.\]
It is known that 
\begin{equation}{\label{eq:bounds}}
\exp \left( \exp \left(((\log 2)(\log 3) +o(1))\frac{k}{\log k}\right)\right)
\leq |{\cal X}_k| \leq c_0^{(\frac{5}{3}+\varepsilon) \, 2^{k-3} },
\end{equation}
where $c_0=1.264\ldots$ is $\lim_{n \rightarrow \infty} u_n^{1/2^n}$,
$u_n=1, u_{n+1}=u_n (u_n+1)$.

The lower bound is due to Konyagin \cite{konyagin}, 
the upper bound due to Browning and
Elsholtz \cite{browning-elsholtz}.
Earlier results on the upper and lower bounds were 
due to S\'{a}ndor \cite{sandor}
and Erd\H{o}s, Graham and Straus (see \cite{erdosandgraham}, page 32).

The set of solutions has also been investigated with various restrictions
on the variables $x_i$.
A quite general and systematic investigation
of expansions of $\frac{a}{b}$ as a sum of unit fractions with restricted
denominators is due to Graham \cite{graham}.
Elsholtz, Heuberger, Prodinger \cite{elsholtz-heuberger-prodinger}
gave an asymptotic formula for the number of solutions of
({\ref{eq:main}}), with two main terms, when the $x_i$ are 
(not necessarily distinct) powers of a fixed integer $t$.

Another prominent case is when all denominators $x_i$ are odd.
Sierpi\'{n}ski \cite{sierpinski} proved that a nontrivial solution exists.
It is known 
that for $k=9$ there are exactly 5 solutions, and for $k=11$, there are exactly 
379,118 solutions (see \cite{shiu, arce-nazario-castro-figueroa}).
Chen, Elsholtz and Jiang \cite{chen-elsholtz-jiang}
showed that for odd denominators $x_i$
the number of solutions of (\ref{eq:main}) is 
increasing with a lower bound of $\sqrt{2}^{k^2(1+o(1))}$.
Other types of restrictions on the denominator have been studied, e.g. by
Croot \cite{croot} and Martin \cite{Martin}. The number of solutions of the equation
$\frac{m}{n}=\sum_{i=1}^k\frac{1}{x_i}$
have also been estimated by Elsholtz and Tao \cite{elsholtz-tao}.

In this paper we take inspiration from the proof of 
Chen et al.~\cite{chen-elsholtz-jiang} for odd denominators,
and the proof of Konyagin \cite{konyagin}
for lower bounds in the case of unrestricted $x_i$. As Konyagin's proof makes
crucial use of ingenious identities, involving a lot of even numbers, 
 it seems unclear whether one can generalize it to odd integers.
Here is our main result:

%

\begin{theorem}
Let $s\geq 1$ and let $\{p_1, \ldots, p_s\}$ denote a set of primes,
and let $P=p_1 \cdots p_s$ be squarefree.
Let $k$ be sufficiently large. Moreover, if $P$ is even, let $k$ be odd.
Let 
\[{\cal X}_{k,P}=\{(x_1, x_2, \ldots,x_k): \sum_{i=1}^k \frac{1}{x_i}=1, 
\text{ with distinct positive } x_i \equiv \pm 1\bmod P\}.\]
There is some positive constant $c(P)$ such that the following holds:
\[
 |{\cal X}_{k,P}| \geq 
\exp \left( \exp \left( c(P) \frac{k}{\log k}\right)\right). \]
\end{theorem}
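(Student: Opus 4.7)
The plan is to adapt Konyagin's double-exponential construction \cite{konyagin} to the residue-class setting by substituting, in his argument, the first $r$ rational primes by the first $r$ primes congruent to $1 \bmod P$. Konyagin's core mechanism is as follows: for a highly-composite integer $N = p_1^{a_1}\cdots p_r^{a_r}$ with appropriately tuned exponents, the number of subsets $S$ of the divisor set $D(N)$ with $\sum_{d \in S} d = N$ and $|S| = k$ is at least $\exp(\exp(c\,k/\log k))$, and each such $S$ gives a distinct representation $1 = \sum_{d \in S} 1/(N/d)$ whose denominators all divide $N$.

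My first step is to ensure these denominators $N/d$ lie automatically in the required residue class. By the prime number theorem for arithmetic progressions, the primes $q \equiv 1 \bmod P$ have density $1/\varphi(P)$ among all primes, so the first $r$ such primes $q_1 < \cdots < q_r$ satisfy $q_r \leq C(P)\,r\log r$. Taking $N = q_1^{a_1}\cdots q_r^{a_r}$ with the exponents Konyagin prescribes, every divisor of $N$ is a product of primes $\equiv 1 \bmod P$, hence itself $\equiv 1 \bmod P$, and a fortiori $\equiv \pm 1 \bmod P$. Since $q_r$ exceeds the $r$-th rational prime only by a $P$-dependent factor, Konyagin's quantitative estimates---the lower bound on $\sigma(N)/N$, the concentration of divisor subset-sums, and the combinatorial assembly of subsets of prescribed sum and size---should go through, with $c = (\log 2)(\log 3)$ replaced by some $c(P) > 0$.

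To reach every sufficiently large $k$ not directly hit by the calibration, and to accommodate the parity constraint, I would pad using a fixed small representation $1 = \sum_{j=1}^{m_0} 1/b_j$ with each $b_j \equiv 1 \bmod P$, and with $m_0$ odd when $P$ is even. Replacing one term $1/x$ in an existing representation by $\sum_{j=1}^{m_0} 1/(xb_j)$ increases the term count by $m_0-1$ and preserves both the residue condition and distinctness, provided $x$ is chosen with prime support disjoint from $\{b_j\}$. The parity hypothesis on $k$ in the even-$P$ case is forced, since any sum of $k$ reciprocals of odd integers has numerator of parity $k$ over an odd common denominator and this numerator must equal the denominator to give $1$.

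The principal obstacle is verifying that Konyagin's subset-sum concentration argument retains its strength when restricted to primes from a single residue class. His estimates rely on specific asymptotics for $\sigma(N)/N$ and on fine equidistribution properties of sums of reciprocals of divisors, each tied to the multiplicative structure of products of small primes; replacing these by primes in a fixed arithmetic progression introduces a $\varphi(P)$-fold density penalty that must be tracked through every quantitative step and absorbed into $c(P)$. One must also re-examine whether the calibration $d(N) \asymp k \log k$ that Konyagin uses to produce subsets of size exactly $k$ remains available after the substitution, or whether a mild padding adjustment (as above) is required to hit the target length on the nose.
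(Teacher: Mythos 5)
Your plan rests on a description of Konyagin's argument that does not match what that proof (or this paper's adaptation of it) actually does, and the gap sits exactly at the step that carries all the content. The engine is not a count of size-$k$ subsets $S$ of the divisor set of a fixed highly composite $N$ with $\sum_{d\in S}d=N$; you give no mechanism for proving such a subset-sum concentration estimate, and none is available off the shelf. The actual engine is the splitting identity $\frac{1}{u}=\frac{1}{u+d}+\frac{1}{\frac{u}{d}(u+d)}$, valid for every divisor $d\mid u$: distinct divisors $d$ give distinct solutions (the denominator $u+d$ determines $d$), so one only needs to reach, in $O_P(\log t)$ steps, a single denominator $u=P^t-1$ whose number of prime factors is large, via Zsigmondy ($\omega(P^t-1)\geq d(t)-6$) and Wigert ($t$ a primorial, so $d(t)=\exp((\log 2+o(1))\log t/\log\log t)$). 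Your substitution ``first $r$ primes $\mapsto$ first $r$ primes $\equiv 1\bmod P$'' does not interact with this mechanism at all: the new denominator $u+d$ created by the splitting step is not a divisor of any prefabricated $N$, and it lies in no controlled residue class (if $u\equiv d\equiv 1\bmod P$ then $u+d\equiv 2\bmod P$, which is not $\pm 1$ once $P>3$). This is precisely the obstruction the paper flags (Konyagin's ``ingenious identities involving a lot of even numbers'') and overcomes by (i) working with $u=P^t-1\equiv -1\bmod P$ and only those divisors $d\equiv 1\bmod P$, of which a pigeonhole argument on partial products of prime factors yields at least $2^{\omega(P^t-1)/P}$; (ii) replacing $d$ by $Pd$ in the identity, so that $u+Pd\equiv -1\bmod P$ and the remainder $\frac{P}{\frac{u}{d}(u+Pd)}$ can be split using a representation $\sum_i 1/n_i=P$; and (iii) invoking van Albada--van Lint to realize the integers $P-2$, $1$, $P$ as sums of distinct unit fractions with denominators $\equiv 1\bmod 3P(P^2-1)$, which is also what repairs the collisions in the doubling/addition steps that reach the exponent $t$ in $O(\log t)$ moves. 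None of these steps, nor any substitute for them, appears in your proposal; you explicitly defer the ``principal obstacle'' rather than resolve it.

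Your peripheral points are sound: the parity obstruction for even $P$ is correctly identified as necessary (clearing denominators and reducing mod $2$), and the padding device of replacing $1/x$ by $\sum_j 1/(xb_j)$ to adjust the term count is essentially what the paper does with $r_2\equiv 1\bmod P$. But without a splitting identity that stays inside the residue classes $\pm 1\bmod P$ and without the lower bound on the number of divisors $d\equiv 1\bmod P$ of $P^t-1$, no double-exponential count of solutions is produced, so the proof is not complete.
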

The case $P=2$ is the case of odd denominators:
\begin{corollary}
Let $k$ be odd, and
\[{\cal X}_{k,\rm{odd}}=\{(x_1, x_2, \ldots,x_k): \sum_{i=1}^k \frac{1}{x_i}=1,
\text{ with odd distinct positive } x_i\}.\]
There is some positive constant $c$ such that the following holds:
\[
 |{\cal X}_{k,{\rm{odd}}}| \geq
\exp \left( \exp \left( c\, \frac{k}{\log k}\right)\right). \]
\end{corollary}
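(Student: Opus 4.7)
The Corollary is an immediate specialization of the Theorem: take $s=1$, $p_1=2$, so that $P=2$; the condition ``$k$ odd when $P$ even'' becomes the hypothesis of the Corollary and the congruence $x_i\equiv\pm 1\pmod{2}$ simply says $x_i$ is odd. The constant is $c=c(2)$. So the real content lies in the Theorem, and the plan that follows is for the Theorem.

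My strategy is to imitate Konyagin's double-exponential construction but restricted to residues $\pm 1\pmod P$. First, using Sierpi\'nski's theorem (for $P=2$) or the construction of Chen, Elsholtz and Jiang (for squarefree $P$), I would produce a base identity $1=\sum_{j=1}^{k_{0}}\tfrac{1}{a_{j}}$ with distinct $a_{j}\equiv\pm 1\pmod P$ and with flexibility to adjust its length near any prescribed $k_{0}$. Next, I would establish a residue-preserving splitting principle: if $a\equiv\pm 1\pmod P$ and $1=\sum_{i}1/s_{i}$ is another such identity, then
\[\frac{1}{a}=\sum_{i}\frac{1}{a s_{i}}\]
and the new denominators $a s_{i}$ lie again in $\pm 1\pmod P$, since $(\pm 1)(\pm 1)\equiv\pm 1$. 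This yields a rich library of legal ``expansions'' of an individual term $1/a$.

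The crucial third step is the Konyagin-style count. In a window of primes $p\equiv\pm 1\pmod P$ of size $\Theta(k/\log k)$ one makes, for each prime, an essentially independent choice of how to use it as the head $a$ of an expansion, producing $\exp(\Theta(k/\log k))$ distinct decompositions; iterating the splitting principle one further level pushes this up to the desired $\exp(\exp(c(P)\,k/\log k))$. Dirichlet's theorem on primes in arithmetic progressions guarantees that a positive proportion of primes lie in each admissible residue class, so the order of magnitude $k/\log k$ survives the restriction, at the cost of a constant that depends on $P$.

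The hard part, and the reason that the paper needs new ideas, is that Konyagin's original identities lean heavily on even numbers, so they are not directly available once we insist on $\pm 1\pmod P$ residues. One must therefore replace his basic splitting identities by analogues built from integers in the admissible classes, then check that the resulting solution tuples are genuinely distinct (different choices in the window must produce different multisets of denominators, which is typically done by tracking the largest prime factors), and finally adjust the total number of terms to equal exactly $k$ by absorbing or releasing terms from a fixed small reservoir of $\pm 1\pmod P$ identities of various lengths. The length-adjustment step is where the parity hypothesis on $k$ is consumed when $P$ is even.
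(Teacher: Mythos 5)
Your opening paragraph is exactly what the paper does for the Corollary: it is the case $P=2$ of the Theorem, the hypothesis ``$k$ odd'' absorbing the condition imposed when $P$ is even, and $c=c(2)$. The gap lies in your plan for the Theorem, which you rightly identify as carrying all the content. Your counting step cannot produce a double exponential. A window of $\Theta(k/\log k)$ primes, each admitting an ``essentially independent'' choice from a fixed library of expansions, yields at most $C^{\,\Theta(k/\log k)}$ solutions, and ``iterating the splitting principle one further level'' multiplies the count by a bounded factor per term; since a solution has only $k$ terms, any scheme in which each term admits boundedly many local modifications is capped at $C^{k}$, which is singly exponential. To reach $\exp(\exp(ck/\log k))$ you need a \emph{single} denominator that can be split in doubly exponentially many distinct ways, and nothing in your sketch manufactures one.

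That is precisely what the paper's three missing ingredients accomplish. First, two residue-preserving identities (a ``doubling'' step turning $\frac{1}{P^{n}-1}$ into a sum containing $\frac{1}{P^{2n}-1}$, and an ``addition'' step producing $\frac{1}{P^{n+1}-1}$) let one reach $\frac{1}{P^{t}-1}$ in $O(\log t)$ terms by following the binary expansion of $t$; hence $t$ may be taken to be a primorial of size $\exp(\Omega(k))$. Second, the Bang--Zsigmondy theorem gives $\omega(P^{t}-1)\ge d(t)-6=\exp\bigl(\Omega(k/\log k)\bigr)$. Third, for every divisor $d\mid P^{t}-1$ with $d\equiv 1\bmod P$ the identity
\[
\frac{1}{P^{t}-1}=\frac{1}{P^{t}-1+Pd}+\sum_{i=1}^{r_3}\frac{1}{\frac{P^{t}-1}{d}\,(P^{t}-1+Pd)\,n_i}
\]
costs only $O_P(1)$ extra terms, keeps all denominators $\equiv\pm 1\bmod P$, and produces pairwise distinct solutions because each is tagged by the denominator $P^{t}-1+Pd$; a pigeonhole argument supplies at least $2^{\omega(P^{t}-1)/P}$ admissible divisors $d$. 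Combining these gives $|{\cal X}_{k,P}|\ge 2^{\exp(\Omega(k/\log k))}$. Your Dirichlet-window heuristic, the claim that one more iteration exponentiates the count, and the appeal to tracking largest prime factors for distinctness all have no counterpart here and do not add up to a proof; the Zsigmondy input and the divisor-splitting identity are the ideas your proposal is missing.
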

For comparison, an upper bound of type $\exp \left(\exp (c_2\, k)\right)$ 
follows from the unrestricted case, see ({\ref{eq:bounds}}).
\section{Proof}

\begin{lemma}{\label{lem:primitive}}
Let $P>1$ be a squarefree integer.
Let $\omega(n)$ denote the number of distinct prime factors of $n$, and $d(m)$
the number of divisors of $n$.
The following holds: $\omega(P^m - 1) \geq d(m)- 6$.
\end{lemma}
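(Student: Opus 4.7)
The plan is to apply the Bang--Zsygmondy theorem to $P^d-1$ for each divisor $d$ of $m$. Since $d\mid m$ implies $P^d-1\mid P^m-1$, any prime factor of $P^d-1$ also divides $P^m-1$. By Zsygmondy's theorem (with base $P$ and second parameter $1$), for all but a short list of exceptional values of $d$, the integer $P^d-1$ admits a \emph{primitive} prime divisor $q_d$: a prime that divides $P^d-1$ but no $P^e-1$ with $1\leq e<d$.

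First I would recall the precise list of exceptions when $a=P$, $b=1$: no primitive prime divisor exists exactly when $(d,P)=(1,2)$, or $d=2$ and $P+1$ is a power of $2$, or $(d,P)=(6,2)$. In particular, regardless of $P$, at most three divisors of $m$ can fail to yield a primitive prime.

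Next I would verify that the primes $q_d$ obtained from different $d\mid m$ are pairwise distinct. Indeed, if $q_{d_1}=q_{d_2}$ with $d_1<d_2$, then $q_{d_2}$ would divide $P^{d_1}-1$, contradicting the primitivity of $q_{d_2}$. Since every $q_d$ divides $P^m-1$, I conclude that $\omega(P^m-1)\geq d(m)-3$, which is stronger than the claimed $d(m)-6$.

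The only substantive ingredient is Zsygmondy's theorem itself; once cited, the argument is essentially immediate, and there is no real obstacle. The generous slack in the constant $6$ of the statement (rather than the sharper $3$) gives the flexibility to discard the possibly exceptional divisors $d\in\{1,2,6\}$ without having to split into cases according to whether $P=2$ or $P+1$ is a power of $2$.
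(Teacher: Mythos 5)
Your proof is correct and follows essentially the same route as the paper: one primitive prime divisor of $P^d-1$ for each divisor $d\mid m$ via Bang--Zsygmondy, with distinctness forced by primitivity and divisibility $P^d-1\mid P^m-1$. The only difference is that you invoke the precise exceptional set $\{1,2,6\}$ (yielding $d(m)-3$), whereas the paper uses the cruder statement that a primitive prime exists for all $n>6$, which is where its constant $6$ comes from; either way the claimed inequality follows.
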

\begin{proof}
Due to a result of Bang, Zsigmondy, Birkhoff and Vandiver
(see e.g. Schinzel \cite{schinzel}),
it is known that for $n > 6$ the values of $P^n-1$ 
have at least one \emph{primitive} prime factor. (A prime factor of the
sequence $P^n-1$ is primitive if it divides $P^n-1$, but does not divide any
$P^m-1$ with $m<n$.).

Let $m=m_1m_2$. For each divisor $m_1$ one has the factorization
\[P^m-1=(P^{m_1}-1)(P^{m_1 m_2-m_1}+ P^{m_1 m_2-2m_1}+ \cdots +P^{m_1}+1),\]
hence the number of prime factors of $P^m-1$ is at least the sum of the number
of primitive prime factors of
$P^{m_1}-1$, for all possible divisors $m_1$ of $m$.
\end{proof}

%

\begin{lemma}{\label{lemma:wigert}}
For $X \geq  3$, there exists a natural number $m <X$ such that
$d(m) > \exp \left((\ln 2 + o(1))
\frac{\ln X}{\ln \ln X}\right)$ as $X \rightarrow \infty$.
\end{lemma}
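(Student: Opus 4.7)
The plan is to take $m$ to be a primorial, i.e. the product of the first $r$ primes, and to choose $r$ as large as possible subject to $m<X$. This is the classical construction behind Wigert's maximal order estimate for the divisor function.

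First I would set $m=m_r:=p_1p_2\cdots p_r$, where $p_i$ denotes the $i$-th prime. Since $m_r$ is squarefree with exactly $r$ prime factors, one has $d(m_r)=2^r$, so the problem reduces to making $r$ as large as possible while keeping $m_r<X$. By the prime number theorem (Chebyshev's bound on $\theta$ suffices), $\log m_r=\theta(p_r)=(1+o(1))p_r$ and $p_r=(1+o(1))r\log r$ as $r\to\infty$.

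Next I would choose $r=r(X)$ to be the largest integer with $m_r<X$. From $\log m_r\sim p_r\sim r\log r$, the condition $\log m_r<\log X$ forces
\[
r\log r \leq (1+o(1))\log X,
\]
so that $r=(1+o(1))\dfrac{\log X}{\log\log X}$. (One obtains $\log r=(1+o(1))\log\log X$ by iterating once.) Hence
\[
d(m_r)=2^r=\exp(r\log 2)=\exp\!\left((\ln 2+o(1))\frac{\ln X}{\ln\ln X}\right),
\]
as $X\to\infty$, which is exactly the lower bound claimed. Taking $m=m_r$ thus proves the lemma.

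The main (mild) obstacle is just the asymptotic bookkeeping: one has to be careful that the $o(1)$ losses in $\theta(p_r)\sim p_r$ and in $p_r\sim r\log r$ do not degrade the constant $\ln 2$. Since both are multiplicative $1+o(1)$ factors and $r$ enters $d(m_r)$ in the exponent only linearly, they only affect the $o(1)$ term in the exponent and not the leading constant $\ln 2$. No primitive prime factor input (Lemma~\ref{lem:primitive}) is used here; this is purely a statement about divisor-rich integers near $X$.
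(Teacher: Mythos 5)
Your proposal is correct and is essentially the paper's own argument: the paper also takes $m$ to be the primorial $P_r=\prod_{i=1}^r q_i$ with $P_r\leq X<P_{r+1}$, gets $d(m)=2^r$, and deduces $r=(1+o(1))\log X/\log\log X$ from the prime number theorem. The only detail the paper adds, which you omit but which is not needed for the lemma as stated, is the remark that one may instead use the first $r$ odd primes to obtain an odd $m$ of the same quality.
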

This follows from a theorem of Wigert \cite{Wigert:1907}, 
but can also be seen directly.
Let $P_r= \prod_{i=1}^r q_i$ be the product over the first primes, and choose 
$m=P_r$, if $P_r \leq X < P_{r+1}$. Then $d(m)=2^r=
 \exp \left((\ln 2 + o(1))\frac{\ln m}{\ln \ln m}\right)=
 \exp \left((\ln 2 + o(1))
\frac{\ln X}{\ln \ln X}\right)$.
Taking the first $r$ odd primes, one can also find an odd number $m$ 
of this type.

\begin{lemma}{\label{lemma:albada-lint-progressions}}
For every $a,b,n_0\in \N$ the following holds:
every positive integer can be written as a finite sum of distinct fractions of
the form $\frac{1}{an+b},\, n \geq n_0$.
\end{lemma}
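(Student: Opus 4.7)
The plan is to effect two elementary reductions and then settle one concrete combinatorial step.

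\emph{First reduction (to $\gcd(a,b)=1$).} Let $d=\gcd(a,b)$ and write $a=da'$, $b=db'$ with $\gcd(a',b')=1$. Since $an+b=d(a'n+b')$, any representation $\sum 1/(an_i+b)=N$ in the given AP is equivalent to $\sum 1/(a'n_i+b')=Nd$ in the reduced AP. Hence one may assume $\gcd(a,b)=1$, replacing $N$ by $Nd$ and keeping the claim for every positive integer simultaneously.

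\emph{Second reduction (to $N=1$, for arbitrary $T\geq n_0$).} If for every $T$ one can write $1$ as a finite sum of distinct $1/x_j$ with $x_j\in\{an+b:n\geq T\}$, then $N$ can be handled by stacking $N$ such representations of $1$ over pairwise disjoint index ranges, each range beginning strictly beyond the largest index used in the previous one. The concatenation gives a representation of $N$ by distinct unit fractions in the AP.

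\emph{Construction of a representation of $1$.} Fix $T\geq n_0$. By Dirichlet's theorem on primes in arithmetic progressions, select a prime $q\equiv b\pmod a$ and distinct primes $p_1<\cdots<p_s$, each $\equiv 1\pmod a$, all exceeding $aT+b$ (the value of $s$ will be chosen below). Put $L=p_1\cdots p_s$ and $M=qL$. Then $M\equiv b\pmod a$, so $M$ lies in the AP; and every divisor $d\mid L$ satisfies $d\equiv 1\pmod a$, hence $M/d\equiv b\pmod a$ also lies in the AP, with distinct $d$ producing distinct $M/d$. The identity
\[
\sum_{d\in\mathcal{D}}\frac{1}{M/d}\;=\;\frac{1}{M}\sum_{d\in\mathcal{D}}d
\]
transforms the target $1=\sum_{d\in\mathcal{D}}1/(M/d)$ into the subset-sum equation $\sum_{d\in\mathcal{D}}d=qL$ with $\mathcal{D}\subseteq\{d:d\mid L\}$. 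Since $\sigma(L)/L=\prod_i(1+1/p_i)\to\infty$ as $s\to\infty$, one has $\sigma(L)\geq qL$ as soon as $s$ is large enough, so the target lies within the achievable range.

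\emph{Main obstacle.} The combinatorial heart of the argument is the \emph{exact} realisation of $qL$ as a sum of distinct divisors of $L$. This is a ``practical-number'' statement restricted to the residue class $1\pmod a$: one must show that for $L$ built from sufficiently many primes (in particular including the smallest primes $\equiv 1\pmod a$), the subset sums of the divisors of $L$ become dense enough in $[0,\sigma(L)]$ to cover every integer in a long initial segment, and in particular the specific target $qL$. Establishing this density — or, equivalently, engineering the choice of $L$ (and possibly $q$) so that $qL$ is hit on the nose — is where all the genuine technical work of the proof resides; the two reductions above are purely formal.
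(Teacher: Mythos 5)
Your two reductions are correct and harmless: passing to $\gcd(a,b)=1$ via $an+b=d(a'n+b')$, and reducing to representations of $1$ over arbitrarily late segments of the progression, both work exactly as you say. But the proposal then stops precisely where the lemma's content begins. The claim that $qL$ can be written \emph{exactly} as a sum of distinct divisors of $L$ is not a routine density statement that can be waved through with $\sigma(L)\geq qL$: subset sums of the divisors of a squarefree number are in general extremely sparse in $[0,\sigma(L)]$, and your specific choice makes this worse, since every prime factor of $L$ exceeds $aT+b$. The only divisor below $p_1$ is $1$, so at the finest scale the achievable subset sums can be adjusted only by $\pm 1$; covering an initial segment of integers by subset sums of divisors is the defining property of \emph{practical} numbers, which requires small prime factors ($p_{i+1}\leq \sigma(p_1\cdots p_i)+1$), in direct conflict with your requirement $p_i>aT+b$ and $p_i\equiv 1\bmod a$. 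So it is not merely that the key step is unproven — as set up, it is doubtful that the target $qL$ can be hit at all without reworking the construction. Since you explicitly flag this as "where all the genuine technical work resides," the proposal is an honest reduction to an open sub-problem, not a proof.

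For comparison, the paper does not prove this lemma from first principles either: it quotes the theorem of van Albada and van Lint (Amer.\ Math.\ Monthly 70 (1963)), which is exactly the case $n_0=0$, and obtains general $n_0$ by the trivial substitution $a'n+b'=an+(an_0+b)$, $n\geq 0$. If you want a self-contained argument, the route to pursue is the one in the van Albada--van Lint paper (or Graham's later, more general splitting techniques), rather than the Dirichlet-primes-plus-divisor-subset-sums scheme, which founders on the practicality obstruction above.
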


This result with $n_0=0$ was originally 
proved by van Albada and van Lint \cite{albada-lint}.
The result for general $n_0$ easily follows by using the 
progression $a'n+b'=an+(a n_0+b), n \geq 0$.

As an easy consequence we have:
\begin{lemma}{\label{lemma:consequence}}

There exist distinct
positive integers 
\[l_1, \ldots, l_{r_1}, m_1, \ldots,m_{r_2},n_1, \ldots, n_{r_3},\]
all larger than $1$,
in the residue class $1 \bmod 3P(P^2-1)$ such that the following holds:
\[
\sum_{i=1}^{r_1}\frac{1}{l_i}=P-2,\qquad 
\sum_{i=1}^{r_2}\frac{1}{m_i}=1, \qquad
\sum_{i=1}^{r_3}\frac{1}{n_i}=P, 
\]
\end{lemma}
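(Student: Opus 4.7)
The plan is to deduce this directly from Lemma \ref{lemma:albada-lint-progressions} applied three times, once for each of the target values $P-2$, $1$, and $P$, using the arithmetic progression $an+b$ with $a=3P(P^2-1)$ and $b=1$. The only thing beyond a single invocation of the van Albada--van Lint result is that the three representations must use mutually disjoint denominators; this will be arranged by choosing the starting index $n_0$ in the lemma to be progressively larger.

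More precisely, first I would handle the representation of the positive integer $1$: apply Lemma \ref{lemma:albada-lint-progressions} with $a=3P(P^2-1)$, $b=1$, $n_0=1$, obtaining distinct $m_1,\ldots,m_{r_2}$ with $m_j=an+1\ge a+1>1$, all $\equiv 1\pmod{a}$, summing to $1$. Set $N_1=\max_j m_j$. Next, for the integer $P$, apply the lemma again with the same $a,b$ and with $n_0$ chosen so that $an_0+b>N_1$; this yields the $n_k$, all larger than every $m_j$, hence disjoint from the previous family. Set $N_2=\max_k n_k$. Finally, if $P\ge 3$, represent the positive integer $P-2$ by a third application of the lemma with $n_0$ chosen so that $an_0+b>N_2$, producing the $l_i$. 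In the boundary case $P=2$ the value $P-2=0$ is realized by the empty sum ($r_1=0$), which is consistent with all other requirements.

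By construction, every denominator appearing in any of the three sums is a positive integer, is strictly larger than $1$, and lies in the residue class $1\bmod 3P(P^2-1)$; and the three families are pairwise disjoint because the denominators used in the second (resp.\ third) family all exceed the largest denominator of the first (resp.\ first two) family. This gives the desired tuple.

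There is no real obstacle here: Lemma \ref{lemma:albada-lint-progressions} is exactly tailored to problems of this shape, and the freedom to shift the starting index $n_0$ --- explicitly noted in the remark following its statement --- is precisely what is needed to force disjointness between the three representations. The only minor bookkeeping point is the degenerate case $P=2$ (where $P-2=0$ is not a positive integer), which is handled by taking the first sum to be empty.
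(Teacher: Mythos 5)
Your argument is correct and is exactly the intended derivation: the paper gives no explicit proof, merely stating the lemma as "an easy consequence" of Lemma \ref{lemma:albada-lint-progressions}, and your use of the progression $3P(P^2-1)n+1$ with successively larger starting indices $n_0$ to force disjointness, together with the empty sum for $P-2$ when $P=2$, fills in precisely what the author left implicit. Nothing further is needed.
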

If $P=2$, then $r_1=0$, otherwise $r_1,r_2,r_3>0$.
Moreover, it is clear that $r_2 \equiv 1 \bmod P$.

\begin{proof}[Proof of Theorem]

The idea employed in \cite{chen-elsholtz-jiang} and \cite{konyagin}
is to write $1$ as a sum of fractions where one denominator
has a large number of divisors, and to split this fraction recursively into
several fractions, where (at least) one of these has again a large number of
divisors.

Here we show that it is possible to have, for any given $t \in \N$, 
the fraction $\frac{1}{P^{t}-1}$ as one of these fractions.
Let us start with the trivial decomposition 
\[1=\frac{1}{P-1} + \frac{P-2}{P-1}.\]
In order to avoid that the denominator $P-1$ occurs more than once
we use Lemma \ref{lemma:consequence} to write the integer $P-2$
as a sum of distinct unit fractions, with $l_i>1$:
$P-2 =\sum_{i=1}^{r_1} \frac{1}{l_i}$.

Next we observe that any fraction $\frac{1}{P^n-1}$ can be decomposed
to obtain a sum of unit fractions containing
a) $\frac{1}{P^{2n}-1}$ or b) $\frac{1}{P^{n+1}-1}$.

\[(a)\quad  \frac{1}{P^n-1}=\frac{1}{P^n+1}+\frac{1}{P^{2n}-1}
+ \sum_{i=1}^{r_2} \frac{1}{(P^{2n}-1)m_i}.\]
By Lemma \ref{lemma:consequence}
\[ 1=\sum_{i=1}^{r_2} \frac{1}{m_i}, 
\quad m_i\equiv 1 \bmod 3P(P^2-1), m_i >1 \text{ and distinct}.\]
Note that all occurring denominators are distinct, with the possible exception
 that $P^n+1=P^{2n}-1$ holds if $P=2,n=1$. In this case,
one rewrites $\frac{1}{P+1}=\frac{1}{3}=\sum_{i=1}^{r_2} \frac{1}{3m_i}$. 
These denominators have not been used before, as the $l_i$ or $m_i$ are
congruent to $1 \bmod 3$, whereas the new denominators $3m_i$ are not.

\[ (b)\quad \frac{1}{P^n-1}=\frac{1}{P^{n+1}-1}+\frac{P-1}{(P^{n}-1)(P^{n+1}-1)}
+ \frac{P-1}{(P^{n+1}-1)}.\]
Note that these three fractions are unit fractions,
as the denominators are divisible by $P-1$.
These three fractions are distinct, 
unless $n=1$. In this case
the fraction $\frac{1}{P^2-1}$ occurs twice and one of these is
rewritten
as $\frac{1}{P^2-1}=\sum_{i=1}^{r_2} \frac{1}{(P^2-1)m_i}$.
These denominators have not been used before, as the previous denominators $l_i$ and $m_i$ were by 
construction congruent to $1 \bmod P^2-1$.
Also, $P^n+1, P^{2n}-1, (P^{2n}-1)m_i$ are new.

For constructing a solution with $\frac{1}{P^t-1}$ we write $t$ in 
binary. The first binary digit is of course $1$.
For the positions $i\geq 2$ 
we perform two different types of steps, corresponding to (a) and (b) above:\\
1) If the $i$-th 
leading position is a 0, then we take the ``doubling'' a).\\
2) If the $i$-th 
leading position is a 1, then we first take the doubling a), 
followed by an ``addition'' b),

For example, if $t=53=110101_2$
and starting from left to right:
\[
\begin{array}{rrrrrrrrr}
i=1|&&2|&3|&&4|&5|&&6|\\
1|&&1|&0|&&1|&0|&&1\\
|&a&b|&a|&a&b|&a|&a&b\\
n=1|&2&3|&6|&12&13|&26|&52&53\\
\end{array}\]

Generally, any integer $t$ can be obtained in at most $2\frac{\log t}{\log 2}$ such
steps a) or b).
In other words, starting from $n=1$ we can obtain a decomposition
\[1=\frac{1}{P^t-1}+\sum_{i=1}^{k'-1} \frac{1}{x_i}\] with
 $k'=O(r_1+r_2\log t+r_3)=O_P(\log t)$ unit fractions. 
Observe that all denominators have been
rearranged to be distinct.


We next come to the most crucial step, which determines the number of
solutions:

\begin{lemma}
Let $\sum_{i=1}^{r_3} \frac{1}{n_i}=P$ (by Lemma \ref{lemma:consequence}).
\begin{itemize}
\item[a)]
For any divisor $d|(P^t-1)$ the following is an identity.
\[\frac{1}{P^t-1}=\frac{1}{P^t-1+Pd}
+\sum_{i=1}^{r_3} \frac{1}{\frac{P^t-1}{d} (P^t-1+Pd)n_i}.\]
\item[b)]
The number of divisors $d|P^t-1$ with $
d\equiv 1 \bmod P$ is at least
$2^{\frac{\omega(P^t-1)}{P}}$.
\item[c)] If $d\equiv 1 \bmod P$, then {\emph{all}}
denominators are $\pm 1 \bmod P$.
\end{itemize}
\end{lemma}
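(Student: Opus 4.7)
The plan splits naturally into three parts, of which (a) and (c) are routine verifications and (b) is the main combinatorial content. For (a), I would substitute the identity $\sum_{i=1}^{r_3}\frac{1}{n_i}=P$ from Lemma~\ref{lemma:consequence} into the right-hand side and factor out the common denominator: the sum collapses to $\frac{Pd}{(P^t-1)(P^t-1+Pd)}$, which when added to $\frac{1}{P^t-1+Pd}=\frac{P^t-1}{(P^t-1)(P^t-1+Pd)}$ gives $\frac{P^t-1+Pd}{(P^t-1)(P^t-1+Pd)}=\frac{1}{P^t-1}$, as required.

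For (b), which I expect to be the main obstacle, I would work in the multiplicative group $G=(\mathbb{Z}/P\mathbb{Z})^\times$ of order $\phi(P)\le P-1$. Let $q_1,\dots,q_\omega$ denote the distinct prime factors of $P^t-1$ (so $\omega=\omega(P^t-1)$); since $\gcd(P^t-1,P)=1$, each $q_j$ has a well-defined residue $\overline{q_j}\in G$. Any subset $S\subseteq\{1,\dots,\omega\}$ with $\prod_{j\in S}\overline{q_j}=1$ in $G$ produces a squarefree divisor $\prod_{j\in S}q_j$ of $P^t-1$ congruent to $1\bmod P$, and distinct subsets clearly give distinct divisors. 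So the task reduces to lower-bounding the number of such ``zero-sum'' subsets of the sequence $\overline{q_1},\dots,\overline{q_\omega}$. Here I would invoke the elementary Davenport bound $D(G)\le|G|=\phi(P)$ (which follows from pigeonhole on partial products): any sequence in $G$ of length $\phi(P)$ contains a non-empty zero-sum subsequence. Applying this greedily to $\overline{q_1},\dots,\overline{q_\omega}$ and iteratively deleting each zero-sum block found produces at least $N\ge\lfloor\omega/\phi(P)\rfloor-1$ pairwise disjoint non-empty zero-sum subsets $T_1,\dots,T_N$. For any $I\subseteq\{1,\dots,N\}$ the union $\bigcup_{i\in I}T_i$ is again zero-sum, so one obtains at least $2^N\ge 2^{\omega/P}$ distinct squarefree divisors with residue $1\bmod P$ (using $\phi(P)\le P-1<P$, with the additive $-1$ absorbed because $\omega\to\infty$ via Lemma~\ref{lem:primitive} when $t$ is large). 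The key insight is recognizing that the problem is exactly a zero-sum counting problem in $(\mathbb{Z}/P\mathbb{Z})^\times$.

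For (c), I would simply check each denominator modulo $P$. Since $P^t\equiv 0\pmod P$, we have $P^t-1\equiv -1\pmod P$, and so $P^t-1+Pd\equiv -1\pmod P$. Combining $d\equiv 1\pmod P$ with $d\cdot\frac{P^t-1}{d}=P^t-1\equiv -1\pmod P$ gives $\frac{P^t-1}{d}\equiv -1\pmod P$. Finally, Lemma~\ref{lemma:consequence} ensures $n_i\equiv 1\pmod{3P(P^2-1)}$, hence $n_i\equiv 1\pmod P$, so $\frac{P^t-1}{d}(P^t-1+Pd)n_i\equiv(-1)(-1)(1)\equiv 1\pmod P$. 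Therefore every denominator in the identity of (a) is $\equiv\pm 1\pmod P$, as claimed.
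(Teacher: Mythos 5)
Your proposal is correct and follows essentially the same route as the paper: part (a) by the same telescoping computation, part (c) by the same residue check, and part (b) by the same pigeonhole-on-partial-products argument (the paper groups the primes into blocks of size $P$ and finds in each a subset with product $\equiv 1 \bmod P$, then takes unions of disjoint blocks, exactly your zero-sum extraction). Your explicit framing via the Davenport constant $D((\mathbb{Z}/P\mathbb{Z})^\times)\le\phi(P)$ is only a cosmetic sharpening of what the paper does, and the paper itself notes in a parenthetical remark that such refinements do not affect the final bound.
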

Part a) and c) are easy to verify. For part b) observe:
For any $P$ prime factors $p_k$, being coprime to $P$,
there is at least one subset of these primes, whose product is
$1\bmod P$. Indeed, the sequence $a_1=p_1, a_2=p_1p_2, ..., a_{P}=\prod_{k=1}^{P} p_k$ must have 
two members $a_i, a_j$, say, which are equivalent modulo $P$. Then
$\frac{a_j}{a_i}= \prod_{k=i+1}^j p_k\equiv 1 \bmod P$.
Therefore, the number of divisors $d \equiv 1 \bmod P$ is at least
$2^{\frac{\omega(P^t-1)}{P}}$.
(Clearly, this argument can be refined (see e.g. \cite{Drmota-Skalba}), 
but this would not improve our final result.)
All solutions produced in this way are distinct, as each solution has
a unique denominator $P^t-1+Pd$. Moreover, as all these denominators are greater than
$P^t$, and as in our application $t$ will be chosen large, these new denominators are greater than
those that have been used before.

We choose $t$ as a product of the first primes. By Lemma {\ref{lemma:wigert}}
the number of divisors, and hence the number of solutions satisfies:

\[  |{\cal X}_{k,P}|\geq 2^{\omega(P^t-1)/P}\geq 
2^{(d(t)-6)/P}\geq 2^{\exp \left( \frac{\log 2 +o(1)}{P} \frac{\log t}{\log \log t}
  \right) }\geq 
 \exp\left( \exp (c(P)  k/\log k)\right).\]
Recall that the number of fractions is $k=O_P(\log t)$.

Finally let us comment on the condition that $k$ is odd, (see statemnet of the Theorem),
when $P$ is even.
By multiplying equation (\ref{eq:main}) by its common denominator, and
reducing modulo $P$ it is clear that this condition is necessary.
The condition is also sufficient as in view of step a) we can replace 
one fraction by $r_2+2$ fractions. Again, by the same argument 
$r_2 \equiv 1 \bmod P$, so that effectively we replace one fraction by 3 fractions (modulo $P$).
Iterating this, we can reach any residue class modulo $P$, 
when $P$ is odd, and the odd residue classes, when $P$ is even.
The number of extra fractions required is $O(P\, r_2)=O_P(1)$. This 
does not influence the overall result. In any case, the theorem is valid for sufficiently 
large $k\geq k_P$, with this necessary and sufficient congruence obstruction.

\end{proof}

\begin{rem}
We have not worked out the constant $c(P)$. One may observe that
$c(P)$ might be as small as $\frac{1}{r_2}$. To estimate $r_2$ one observes that
$\sum_{i=1}^{r_2} \frac{1}{i\, 3P(P^2-1)-1}\geq 
\sum_{i=1}^{r_2} \frac{1}{m_i}\approx \frac{\log r_2}{3P(P^2-1)} > P$ 
must hold.
Hence $r_2$ appears to be at least of exponential growth in $P$.
Taking denominators $x_i$ only coprime to $P$, but not necessarily restricted 
to $x_i \equiv \pm 1 \bmod 3P(P^2-1)$ would improve this constant $c(P)$.

\end{rem}
I would like to thank Sergei Konyagin for insightful comments on the problem
during a conference at CIRM (Luminy). 
The paper has been completed during a very pleasant stay 
at Forschungsinstitut Mathematik (FIM) at ETH Z\"urich.

\end{document}